\newtheorem{thm}{Theorem}[section]
\newtheorem{cor}[thm]{Corollary}
\newtheorem{lem}[thm]{Lemma}
\theoremstyle{definition}
\newtheorem{defn}[thm]{Definition}
\theoremstyle{definition}
\newtheorem{rem}[thm]{Remark}
\theoremstyle{definition}
\newtheorem{exm}[thm]{Example}
\theoremstyle{definition}
\newcommand{\reg}{\mbox{\rm reg}}
  \newtheorem{thevarthm}[thm]{\varthmname}
\newenvironment{varthm*}[1]{\trivlist\item[]{\bf #1.}\it}{\endtrivlist}
\title
[Containment problem for the quasi star configurations  $\dots$]{Containment problem for the quasi star configurations of points in $\mathbb{P}^2$}
\author{Hassan Haghighi, Mohammad Mosakhani }
\address{Hassan Haghighi, Mohammad Mosakhani, \ Faculty of Mathematics, K. N. Toosi
University of Technology, Tehran, Iran.}
  \email{haghighi@kntu.ac.ir, mosakhani@aut.ac.ir}
\keywords{Configuration of points, Symbolic power, Resurgence, Waldschmidt
constant, Quasi star configuration, Containment problem} \subjclass[2010]{Primary 13A15, 14N20; Secondary 13F20,
14N05 }
\begin{document}
\maketitle
 \vspace{-0.5cm}
\begin{abstract}
 In this paper,  the containment problem for the
defining ideal of a special type of zero dimensional subschemes of
$\mathbb{P}^2$, so called quasi star configurations, is
investigated. Some sharp bounds for the resurgence of these types of
ideals are given.  As an application of this result, for every real
number $0 < \varepsilon < \frac{1}{2}$, we construct an infinite
family of homogeneous radical ideals of points in
$\mathbb{K}[\mathbb{P}^2]$ such that their resurgences lie in the
interval $[2- \varepsilon ,2)$. Moreover, the Castelnuovo-Mumford
regularity of all ordinary powers of defining ideal of quasi star
configurations are  determined. In particular, it is shown
that, 
 all of them have linear resolution.
\end{abstract}
$\vspace{5mm}$

\section{Introduction}
Let $\mathbb{K}$ be an  algebraically closed field and let
$R=\mathbb{K}[x_0, \dots , x_N]= \mathbb{K}[\mathbb{P}^N]$ be the
coordinate ring of the projective space $\mathbb{P}^N$. Let $I$ be a
nontrivial homogenous ideal of $R$. For each positive integer $m$,
two different kinds of powers of $I$ can be constructed. The first
one, and the most algebraic one, is the ordinary power $I^m$ of $I$,
generated by all products of $m$ elements of $I$. The second one, is
the symbolic power $I^{(m)}$ of $I$, which is defined as follows:
\begin{align*}
I^{(m)}= R \cap I^mR_U,
\end{align*}
where $U$ is the multiplicative closed set $R-\bigcup_{P \in {\rm Ass}(I)} P.$
The $m$th symbolic power of $I$, rather than
the algebraic nature, has a geometric nature.
For example,
if $I \subseteq \mathbb{K}[\mathbb{P}^N]$ is the radical ideal
of a finite set of points $ p_1, \dots, p_n \in \mathbb{P}^N$,
then $I^{(m)}$,
 which is called {\it fat points ideal},
geometrically is defined as the ideal of all homogeneous forms vanishing to order at
least $m$ at all points $p_i$, and algebraically is defined as
$I^{(m)}= \cap_i I(p_i)^m$, where $I(p_i)$ is the ideal of
polynomials vanishing at the point $p_i$.
From the above definitions, immediately follows that $I^m \subseteq I^{(m)}$.

In recent years, comparing the behavior of symbolic powers and
ordinary powers of $I$ has prompted many challengeable problems and
conjectures in algebraic geometry and commutative algebra (see
\cite{BCH, Ha_Hu, SS}). One of these problems, known as {\it
containment problem},  asks for what pairs of positive integers
$(m,r)$ one may has $I^{(m)} \subseteq I^r$. The containment problem
has an asymptotic interpretation. Indeed, instead of searching for
pair  of integers $(m,r)$ such that $I^{(m)} \subseteq I^r$, one can
approach to the problem asymptotically. Harbourne and Bocci
\cite{BH} introduced an asymptotic invariant, known as the {\it
resurgence}, as follows:
\begin{align*}
\rho(I)=\sup \{ \frac{m}{r} \mid I^{(m)} \nsubseteq I^r \},
\end{align*}
where measures the discrepancies between the symbolic powers of a
homogenous ideal and its ordinary powers. The resurgence of $I$
exists and from its definition immediately follows that
\begin{equation}\label{17}
I^{(m)} \subseteq
I^r  \text{ if } \frac{m}{r} > \rho(I).
\end{equation}
In dealing with the containment problem, Ein, Lazarsfeld, and Smith
in \cite{ELS} as well as, Hochster and Huneke in \cite{Ho-Hu}, showed, but
with different methods,  that  for
all pairs of positive integers $(m,r)$ such that $m \ge Nr$, the containment
$I^{(m)} \subseteq I^{r}$ holds. In particular this implies
 $1 \leq \rho(I) \leq N$.

 Nevertheless, computing this numerical invariant  of a homogeneous ideal $I$ is
 difficult task and there are only few ideals $I$ for which the exact value of $\rho(I)$ is known (see \cite{BH,DHNSZT,BCH, BH1}).

 In this paper,  we study the containment problem of the defining ideal
of a special kind of  configuration of points in $\mathbb{P}^2$, so
called quasi star configuration (see Definition \ref{def}), which we
denote it by $Z_d$. Our main result, provides upper and lower bounds
for
$\rho(I(Z_d))$ as follows:
\\\\
{\bf Main Theorem} (Theorem \ref{main th}){\bf .} Let $I$ be the
defining ideal of a quasi star configuration of points $Z_d$ in
$\mathbb{P}^2$. If $d \geq 10$ then $2-\frac{2}{\sqrt{d}+1} \leq
\rho(I) \leq 2- \frac{2}{d+1}$.
\\\\
This result, with more details, is proved in Example \ref{exm1} and
Theorem \ref{main th}. Furthermore, as a corollary to the above
theorem, we  obtain the following result.
\\
\\
{\bf Theorem} (Corollary \ref{cor2}){\bf .} Let $ 0< \varepsilon <
1/2$ be a real number. Then there exists a radical ideal
$I_{\varepsilon}$
in $\mathbb{K}[\mathbb{P}^2]$ such that
$\rho(I_{\varepsilon}) \in [2- \varepsilon ,2).$
\\

 In \cite[Corollary 1.1.1]{BH}, it is
shown that for any homogeneous ideal $I\subset
\mathbb{K}[\mathbb{P}^N]$ the containment $I^{(rN)} \subset I^r$,
where $r$ is a positive integer,  is optimal. Whenever $N=2$, we can
show this optimality can be achieved via quasi star configurations
(see Corollary \ref{cor}).

By (\ref{17}), a necessary condition for the failure of $I^{(2r-1)}
\subseteq I^r$, for $r \geq 2$, is that $\rho(I) \geq
\frac{2r-1}{r}$, but possibly this is not a sufficient condition for
the containment. One of our results, in this paper is to show that,
for every integer $r \geq 2$, there exists a radical ideal  $I$ such
that meets
 this necessary condition. More generally:
\\\\
{\bf Theorem}{ (Corollary \ref{cor1})}{\bf.} Let $r \geq 2$ be an
integer. Then there exists a configuration of points in
$\mathbb{P}^2$  such that its defining ideal $I$ gives the necessary
condition for the failure of the containment $I^{(2r-1)} \subseteq
I^r$, i.e., $\rho(I) \geq \frac{2r-1}{r}$.

\section{Preliminaries}
Among the numerical  invariants of an arbitrary homogeneous ideal
which have been developed to study the containment problem, the {\it
Waldschmidt constant}, plays as a decisive role. This constant is
defined  as:
\begin{align*}
\widehat{\alpha}(I)=\underset{m \rightarrow \infty}{\lim}
\frac{\alpha(I^{(m)})}{m} =\underset{m \geq 1}{\inf} \frac{\alpha(I^{(m)})}{m},
\end{align*}
where $\alpha(I)$ is the least degree of a nonzero polynomial in $I$
and is called the {\it initial degree} of $I$. It is shown that this
limit exists \cite[Lemma 2.3.1]{BH}. The containment $I^m \subseteq
I^{(m)}$  holds for any positive integer $m$, which implies
$\alpha(I^{(m)}) \leq \alpha(I^m)=m\alpha(I)$. Therefore
$\widehat{\alpha}(I) \leq \alpha(I)$ and consequently
$\alpha(I)/\widehat{\alpha}(I) \ge 1$. Moreover, see \cite[Section
2.1]{456}, we have
\begin{equation}\label{eq}
\frac{\alpha(I^{(m)})}{m+1} \leq \widehat{\alpha}(I) \leq
\frac{\alpha(I^{(m)})}{m}.
\end{equation}

In general, computing the resurgence of an arbitrary homogeneous
ideal $I$ is quite difficult. However, whenever $I$ is the defining
ideal of a zero dimensional subscheme in $\mathbb{P}^N$, Bocci and
Harbourne \cite[Theorem 1.2.1]{BH} used the another numerical
invariants of $I$, i.e., the  Castelnuovo-Mumford 
regularity $\reg(I)$, its initial degree, and the Waldschmidt
constant to bound $\rho(I)$ in terms of these invariants as follows:
\begin{align*}
\frac{\alpha(I)}{\widehat{\alpha}(I)} \leq \rho(I) \leq
\frac{\reg(I)}{\widehat{\alpha}(I)}.
\end{align*}
Recall that, if $0 \rightarrow F_r \rightarrow \dots \rightarrow F_i
\rightarrow \dots \rightarrow F_0 \rightarrow I \rightarrow 0$ is a
minimal free resolution of $I$ over $R$, then $\reg(I)$ is defined
to be $\max \{ f_j -j \mid j \geq 0 \} $, where $f_j$ is the maximum
degree of the generators of the free module $F_j$. In particular, we
have
\begin{equation}\label{19}
\rho(I) = \frac{\alpha(I)}{\widehat{\alpha}(I)} \text{\ \ if \ }
\reg(I)=\alpha(I).
\end{equation}
But it is rare to happen
$\reg(I)=\alpha(I)$, and even if the equality holds, it is a
hard task to compute $\reg(I)$ and $\widehat{\alpha}(I)$.
Nevertheless, computing these two invariants may be easier than
computing $\rho(I)$.

\section{Linear free resolution of a quasi star configuration}
The authors of \cite{DST2}, in the process of classification of all
configurations of reduced points in $\mathbb{P}^2$ with the
Waldschmidt constant less than $9/4$,   introduced a special type of
configuration of points, which called it {\it quasi star
configuration}, and is constructed from what is known as {\it star
configuration} that was introduced in \cite{GHM}. In the following
we recall the definition of quasi star configuration.

\begin{defn} \label{def} \cite[Definition 2.3]{DST2}.
Let $S_2(2,d)=p_1 + \dots + p_{\binom{d}{2}}$ be a star
configuration of points in $\mathbb{P}^2$ which is obtained by pair
wise intersections of $d \geq 3$ general lines $L_1, \dots , L_d$
and let $T_d= \{ q_1, \dots , q_d\}$ be a set of $d$ distinct points
in $\mathbb{P}^2$, such that $T_d \cap S_2(2,d)=\varnothing$. We say
that $Z_d=T_d+S_2(2,d)$ is a quasi star configuration of points if
 for each $i=1, \dots ,d$, the point $q_i$,
  lie on the line $L_i$. Moreover, these points are not collinear.
  \end{defn}
In the above definition, the points of $T_d$ need not to lie on a line. But if all points of $T_d$ are collinear,
 then $Z_d$ would be the star configuration
$S_2(2,d+1)$.

The quasi star configuration of points $Z_5$ is
depicted in the figure below.
\begin{center}
\begin{tikzpicture}
\draw [-] [line width=1.pt] (-.8,0)--(5.8,0);
\draw [-] [line width=1.pt] (2.2,2.44)--(.8,-2.74);
\draw [-] [line width=1.pt] (1.5,2.625)--(4.5,-2.925);
\draw [-] [line width=1.pt] (-1,.5)--(4.5,-2.25);
\draw [-] [line width=1.pt] (.5,-2.25)--(6,.4);
\draw [black] (6.2,0) node{$L_1$};
\draw [black] (5.9,.7) node{$L_2$};
\draw [black] (2.5,2.2) node{$L_3$};
\draw [black] (1.3,2.25) node{$L_4$};
\draw [black] (-1,.8) node{$L_5$};
\shade[ball color=black] (0,0) circle (2.2pt);
\shade[ball color=black] (5.2,0) circle (2.2pt);
\shade[ball color=black] (4,-2) circle (2.2pt);
\shade[ball color=black] (1,-2) circle (2.2pt);
\shade[ball color=black] (2,1.7) circle (2.2pt);
\shade[ball color=black] (1.54,0) circle (2.2pt);
\shade[ball color=black] (2.918,0) circle (2.2pt);
\shade[ball color=black] (2.55,-1.27) circle (2.2pt);
\shade[ball color=black] (1.35,-.675) circle (2.2pt);
\shade[ball color=black] (3.378,-.855) circle (2.2pt);
\shade[ball color=black] (4.2,0) circle (2.2pt);
\shade[ball color=black] (1.76,.8) circle (2.2pt);
\shade[ball color=black] (2.48,.8) circle (2.2pt);
\shade[ball color=black] (.64,-.31) circle (2.2pt);
\shade[ball color=black] (4.2,-.46) circle (2.2pt);
\draw [thick](3,-3.5) node {Figure 1
};
\end{tikzpicture}
\end{center}
Our next goal is to show that the defining ideal of a quasi star
configuration $Z_d$ has a linear minimal free resolution. To do
this, the following lemma is needed. Recall that whenever $J$ is a
saturated ideal in $R=\mathbb{K}[\mathbb{P}^N]$, the multiplicity of
$R/J$, denoted by $e(R/J)$, is equal to the degree of the closed
subscheme  associated to $J$.
\begin{lem}\label{lem}
Let $d$ be a positive integer and let $I$ be the defining  ideal of
$t=d(d+1)/2$ reduced points $\{p_1, \dots, p_t\}$ in
$\mathbb{P}^2$. Let $J$ be a saturated homogeneous ideal of
$R=\mathbb{K}[\mathbb{P}^2]$ such that $J \subseteq I$ and let $J$
has the following minimal free resolution
\begin{align*}
0\rightarrow R^{\beta_2}(-d-1) \rightarrow R^{\beta_1}(-d)
\rightarrow J \rightarrow 0.
\end{align*}
 Then $J=I$.
\end{lem}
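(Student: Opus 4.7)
The plan is to compute the Hilbert polynomial of $R/J$ from the given resolution, show that it matches $HP_{R/I}(n)=t$, and then lift this numerical identity to the ideal equality $J=I$ using saturation.

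First, taking ranks in $0\to R^{\beta_2}(-d-1)\to R^{\beta_1}(-d)\to J\to 0$ and using that $J$ is a nonzero ideal in the domain $R$ (so of rank $1$), one obtains $\beta_1-\beta_2=1$. The Auslander--Buchsbaum formula applied to $R/J$, whose projective dimension is $2$, then yields $\mathrm{depth}(R/J)=1$, so $R/J$ has no nonzero submodule of finite length. Additivity of Hilbert functions along $0\to R^{\beta_2}(-d-1)\to R^{\beta_1}(-d)\to R\to R/J\to 0$ produces
\begin{align*}
HP_{R/J}(n)=\binom{n+2}{2}-\beta_1\binom{n-d+2}{2}+\beta_2\binom{n-d+1}{2},
\end{align*}
and substituting $\beta_1=\beta_2+1$ makes the $n^2$-coefficient vanish automatically.

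The crux will be to show $\dim R/J=1$, so that $HP_{R/J}$ is in fact a constant. Granting this, the vanishing of the linear coefficient in the displayed polynomial forces $\beta_2=d$ (whence $\beta_1=d+1$), and the constant term simplifies to exactly $d(d+1)/2=t=e(R/I)$. To obtain $\dim R/J=1$ one would invoke the Hilbert--Burch theorem applied to the $\beta_1\times\beta_2$ syzygy matrix $M$ of $J$, whose entries are linear forms by the shape of the resolution: Hilbert--Burch writes $J=a\cdot I_{\beta_2}(M)$ for some $a\in R$, and the task is to force $a$ to be a unit so that $R/J$ becomes Cohen--Macaulay of codimension $2$. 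This dimensional step is the main obstacle: it is precisely where the inclusion $J\subseteq I$ and the radical-point structure of $I$ must be combined carefully to exclude a common polynomial factor among the generators of $J$.

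Finally, once $HP_{R/J}(n)=t=HP_{R/I}(n)$ for $n\gg 0$, the short exact sequence $0\to J\to I\to I/J\to 0$ shows that $I/J$ has Hilbert function vanishing in high degrees, so $I/J$ is a finite-length $R/J$-submodule of $R/J$. Since $R/J$ has no nonzero finite-length submodule by the depth computation above, $I/J=0$ and hence $J=I$; the remaining bookkeeping is entirely routine.
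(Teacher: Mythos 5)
Your strategy is sound in its outer layers, but it stalls at exactly the step you yourself flag, and that step is the whole content of the lemma. The rank count $\beta_1=\beta_2+1$, the Hilbert-polynomial bookkeeping (which, once $\dim R/J=1$ is known, forces $\beta_2=d$ and constant value $t$), and the finish via $\operatorname{depth}(R/J)=1$, so that the finite-length submodule $I/J\subseteq R/J$ must vanish, are all correct; this part is in fact a more careful substitute for the paper's appeal to the Huneke--Miller multiplicity formula and its one-line conclusion from $e(R/I)=e(R/J)$. But the claim $\dim R/J=1$ (equivalently, that the Hilbert--Burch factor $a$ is a unit, i.e.\ that the degree-$d$ generators of $J$ have no common factor) is not deducible from the stated hypotheses at all, so no amount of care in combining $J\subseteq I$ with the resolution can close the gap. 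Concretely, for $d=3$ take six reduced points with $p_1,p_2,p_3$ on a line $L$ and $p_4,p_5,p_6$ not collinear, and set $J=L\cdot I(\{p_4,p_5,p_6\})$. Then $J$ is saturated, $J\subseteq I$, and $J\cong I(\{p_4,p_5,p_6\})(-1)$ has minimal free resolution $0\to R^{2}(-4)\to R^{3}(-3)\to J\to 0$, yet $J\neq I$ since $\dim R/J=2$. So the "dimensional step" is not routine: without an extra hypothesis (general position of the points, or direct knowledge that the generators of $J$ share no common divisor) it fails.

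For comparison, the paper's own proof slides over precisely this point: it asserts that because the projective dimension of $R/J$ is two, $X(J)$ is zero-dimensional (whereas only $\operatorname{codim} J\le 2$ follows), and then applies Huneke--Miller to the pure resolution to get $e(R/J)=d(d+1)/2$ and compares with $e(R/I)=t$. In the actual application, Theorem \ref{the3}, the codimension-two property of $J=I(A)$ is verified separately (the maximal minors of $A$ have no common linear factor) before the lemma is invoked, which is where the missing input really comes from. So your diagnosis of where the difficulty sits is exactly right, but as a proof of the lemma as stated your proposal is incomplete, and the hole cannot be patched without strengthening the hypotheses along the lines just indicated (e.g.\ assuming $\operatorname{codim} J=2$, or that the generators of $J$ have no common factor).
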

\begin{proof}
Let $X(I)$ and $X(J)$ be the subschemes  correspond to the ideals
$I$ and $J$, respectively.  Since $J \subseteq I$, the support of
$X(I)$, i.e., $\{ p_1, \dots , p_t \}$, should be contained in the
support of the scheme $ X(J)$. Moreover, the minimal free resolution
of $J$ implies that the projective dimension of $J$ is equal to two
and hence $X(J)$ is a zero-dimensional subscheme of $\mathbb{P}^2$.
Applying a theorem of Huneke and Miller (\cite[Theorem 1.2]{HM}) to
this  minimal free resolution, implies that 
 $e(R / J)=d(d+1)/2=t$. On
the other hand, since the multiplicity of the coordinate ring of  a
finite set of reduced points is equal to the number of its points,
hence $e(R/I)=t$. But, since $I$ and $J$ are the ideals of points
such that $J \subseteq I$ and $e(R/I)=e(R/J)$, therefore $I=J$, as
required.
\end{proof}

Now we are ready to show  $I(Z_d)$ has a linear minimal free
resolution.
\begin{thm}\label{the3}
Let $I\subset R=\mathbb{K}[\mathbb{P}^2]$ be the ideal associated to
quasi star configuration of points $Z_d=T_d+S_2(2,d)$ in
$\mathbb{P}^2$. Then the resolution
\begin{align*}
0\rightarrow R^d(-d-1) \rightarrow R^{d+1}(-d) \rightarrow I \rightarrow 0
\end{align*}
is the minimal free resolution of $I$.
In particular, $\alpha(I)=\reg(I)=d$.
\end{thm}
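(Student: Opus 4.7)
The plan is to exhibit $d+1$ explicit elements of $I$ in degree $d$ admitting $d$ linear syzygies, and then invoke Lemma~\ref{lem} to identify the ideal they generate with $I$. Let $L_1,\dots,L_d$ be the defining linear forms of the lines of $S_2(2,d)$, and for each $i$ let $M_i$ be any linear form vanishing at $q_i$. Put
\begin{align*}
G_0 := L_1L_2\cdots L_d, \qquad G_i := M_i\prod_{j\neq i}L_j \quad (i=1,\dots,d).
\end{align*}

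First I would check that $G_k\in I$ for every $k$. The form $G_0$ vanishes on every point of $Z_d$ because every such point lies on some $L_j$. For $G_i$: any intersection point $L_j\cap L_k$ of the star configuration is killed by a factor $L_\ell$ with $\ell\neq i$ in the product; the point $q_i$ is killed by $M_i$; and $q_j$ with $j\neq i$ is killed by the factor $L_j$ in the product (using $q_j\in L_j$ and $T_d\cap S_2(2,d)=\varnothing$ from Definition~\ref{def}). Next, the identity $L_iG_i = L_iM_i\prod_{j\neq i}L_j = M_iG_0$ produces $d$ linear syzygies $L_iG_i-M_iG_0=0$, which assemble into a $(d+1)\times d$ matrix $\Phi$ whose $i$-th column has $-M_i$ in the $G_0$-row, $L_i$ in the $G_i$-row, and zeros elsewhere. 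A direct cofactor expansion shows that the $d\times d$ minor of $\Phi$ obtained by deleting the $G_k$-row is $\pm G_k$: deleting the top row leaves the diagonal matrix $\mathrm{diag}(L_1,\dots,L_d)$ with determinant $G_0$, while for $k\ge 1$ the $k$-th column of the remaining submatrix has only the entry $-M_k$ in the $G_0$-row, and expanding along it picks up $\pm M_k\prod_{j\neq k}L_j=\pm G_k$.

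Set $J=(G_0,\dots,G_d)\subseteq I$. Since each linear form $L_i$ fails to divide the single generator $G_i$, no linear form divides all of $G_0,\dots,G_d$, so $J$ has height at least $2$; and it has height at most $2$ because $R$ is three-dimensional and $J\neq 0$. The Hilbert--Burch theorem therefore applies to $\Phi$ and yields the exact sequence
\begin{align*}
0\to R^{d}(-d-1)\xrightarrow{\;\Phi\;} R^{d+1}(-d)\to J\to 0,
\end{align*}
which is a \emph{minimal} free resolution because every entry of $\Phi$ is linear. Now Lemma~\ref{lem} applies verbatim to $J$ and forces $J=I$, so the displayed resolution is the minimal free resolution of $I$; reading off the Betti numbers gives $\alpha(I)=d$ and $\reg(I)=\max\{d-0,(d+1)-1\}=d$.

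The main point requiring care is the height computation that unlocks Hilbert--Burch; this is where the geometric hypotheses of Definition~\ref{def} enter, through the observation that no single linear form is shared by all $G_k$. Beyond this, the argument is a mechanical combination of the determinantal Hilbert--Burch structure with the multiplicity equality already packaged inside Lemma~\ref{lem}.
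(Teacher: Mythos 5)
Your argument is essentially the paper's own proof: the paper uses exactly these generators (with your $M_i$ replaced by a line $L_i'$ through $q_i$ chosen to miss all other points of $Z_d$), assembles the same $(d+1)\times d$ matrix of linear forms, reads off the resolution from its maximal minors, checks minimality, and then invokes Lemma~\ref{lem}; your explicit use of Hilbert--Burch merely makes precise the paper's assertion that the columns of the matrix account for all first syzygies. One small correction is needed: ``any linear form vanishing at $q_i$'' allows $M_i=L_i$ (since $q_i\in L_i$), in which case $G_i=G_0$ and $L_i$ divides every $G_k$, so your key claim that each $L_i$ fails to divide $G_i$ --- and hence the height-$2$ step that unlocks Hilbert--Burch --- would fail; choose $M_i$ not proportional to $L_i$ (the paper's choice of $L_i'$ automatically ensures this), after which everything goes through, with the saturation of $J$ required by Lemma~\ref{lem} following from the resolution via Auslander--Buchsbaum, since $\mathrm{depth}(R/J)\ge 1$.
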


\begin{proof}
Let the star configuration $S_2(2,d)=p_1 + \dots + p_{\binom{d}{2}}$
be obtained by pair wise intersections of $d \geq 3$ general lines
$L_1, \dots , L_d$ and let $T_d=\{ q_1, \dots , q_d \}$. Let
$L_i^\prime$, where $1 \leq i \leq d$, be the line through $q_i$
that does not pass through any of the other points $Z_d$. Let $A$ be
the matrix
\begin{equation*}
A=
\begin{bmatrix}
L_1 & 0 & \dots & 0 \\
0 & L_2 & \dots & 0 \\
\vdots & \vdots & \ddots & \vdots \\
0 & 0 & \dots & L_d \\
L^\prime_1 & L^\prime_2 & \dots & L^\prime_d
\end{bmatrix}
\in \mathscr{M}_{(d+1) \times (d)}(R),
\end{equation*}
where A has as the top $d \times d$ rows a diagonal matrix
with $L_1, L_2, \dots L_d$ along the diagonal and as the last row the vector
$[L^\prime_1, L_2^\prime , \dots , L_d^\prime]$.
Let $I(A)$ be the ideal generated by the maximal minors of $A$. Thus we have
\begin{align*}
I(A)=(L_1L_2 \dots L_d, L^\prime_1 L_2 \dots
L_d, L_1L^\prime_2 L_3 \dots L_d, \dots, L_1L_2 \dots L_{d-1}L^\prime_d ).
\end{align*}
Since the columns of matrix $A$ is the first syzygy of $I(A)$,
then this ideal has the following free resolution
\begin{displaymath}
\xymatrix{
0 \ar[r] &  R^d(-d-1) \ar[r]^{A} & R^{d+1}(-d) \ar[r] & \ar[r]  I(A) & 0 }.
\end{displaymath}
In the sequel, we show this resolution is minimal. First, we show
$I(A)$ is the ideal of points.
Indeed, since $I(A)$ is not a principal ideal and since
 there is not any linear form, for example $L$, such
that $L$ divides all  $d+1$ elements of $I(A)$, thus $I(A)$ is the defining ideal of
a zero dimensional subscheme
 in $\mathbb{P}^2$.

Since the coordinate ring of  a zero-dimensional subscheme in
$\mathbb{P}^2$ is always Cohen-Macaulay, so
$\dim(R/I(A))=\text{depth}(R/I(A))=1$. Thus the projective dimension
$R/I(A)$ is equal to two, which implies that the above free
resolution  for $I(A)$ is  minimal.

Finally, it is easy to see that $I(A) \subseteq I$.
Since two ideals $I$ and $I(A)$ satisfy in the conditions of Lemma
\ref{lem}, thus $I=I(A)$.
As a consequence, we have $\alpha(I)=\reg(I)=d$.
\end{proof}

Our next theorem is an extension of  \cite[Theorem 4.6]{789}. As a special case, it
reveals more characterizations of a  quasi star configuration of
points. To state it, we need to recall some
preliminaries.

Let $X=m_1p_1+ \dots + m_rp_r$ be a zero dimensional subscheme of
$\mathbb{P}^2$ and let $I=I(X)$ be the corresponding saturated ideal
of $X$ in $R=\mathbb{K}[\mathbb{P}^2]$. Recall that the Hilbert
function of $X$, denoted by $H(R/I,t)$, is a numerical invariant of
$X$, defined by $H(R/I,t):=\dim_{\mathbb{K}}(R/I)_t$, where
$(R/I)_t$ is the $t-$th graded component of $R/I$. Moreover, $X$ is
called has a generic Hilbert function if for all nonnegative
integers $t$, $H(R/I,t) =\min \{\binom{t+2}{2}, \deg X \}$, where
$\deg X=\sum_i \binom{m_i+1}{2}$. In particular, when $X$ is a
finite set of reduced points, then $\deg X=\vert X \vert$.
\begin{thm}\label{th reg}
Let $X$ be a finite set of reduced points in $\mathbb{P}^2$ and let
$I=I(X)$. Also let $\alpha(I)=\alpha$. Then the following conditions
are equivalent:
\begin{itemize}
\item[$(i)$] The ideal $I$ is minimally generated by
$\alpha +1$ generators of degree $\alpha$.
\item[$(ii)$] The scheme $X$ has the generic
Hilbert function and $\vert X \vert =\binom{\alpha+1}{2}$.
\item[$(iii)$] The ideal $I$ has a linear minimal free resolution as follows:
\begin{displaymath}
\xymatrix{ 0 \ar[r] & R^{\alpha}(-\alpha-1)
\ar[r] & R^{\alpha+1}(-\alpha)
 \ar[r] & \ar[r] I & 0. }
\end{displaymath}
\item[$(iv)$]For the ideal $I$, we have $\reg(I) =\alpha(I)$.
\item[$(v)$] For all $m \geq 1$, we have
$\reg(I^m)= m \reg(I) = m\alpha(I)$.
\item[$(vi)$]The ideal $I^2$ is minimally generated by
$\binom{\alpha+2}{2}$ generators of degree $2\alpha$.
\item[$(vii)$] The ideal $I^2$ has a linear minimal free resolution as follows:
\begin{equation*}
0 \rightarrow R^{\binom{\alpha}{2}}(-(2\alpha+2)) \rightarrow
R^{2\binom{\alpha+1}{2}}(-(2\alpha+1)) \rightarrow
R^{\binom{\alpha+2}{2}}(-2\alpha) \rightarrow I^2 \rightarrow 0.
\end{equation*}
\end{itemize}
\end{thm}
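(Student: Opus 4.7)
The plan is to prove the seven equivalences via a main cycle (iii) $\Rightarrow$ (i) $\Rightarrow$ (ii) $\Rightarrow$ (iv) $\Rightarrow$ (iii), together with two shorter loops (iv) $\Leftrightarrow$ (v) and (i) $\Leftrightarrow$ (vi) $\Leftrightarrow$ (vii) attached to it. This avoids redundant arguments while exploiting the fact that once the Hilbert--Burch resolution in (iii) is in hand, essentially every other condition can be read off from it.

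For (iii) $\Rightarrow$ (i) one only reads off the number and degrees of the generators. For (i) $\Rightarrow$ (ii), the assumption $\alpha(I) = \alpha$ forces $H(R/I, t) = \binom{t+2}{2}$ for $t < \alpha$, and (i) yields $H(R/I, \alpha) = \binom{\alpha+2}{2} - (\alpha+1) = \binom{\alpha+1}{2}$. Since $X$ is reduced, its Hilbert function is non-decreasing and converges to $|X|$; combined with the fact that $R/I$ is Cohen--Macaulay of codimension $2$ so further growth is controlled by Macaulay's bound, this forces $|X| = \binom{\alpha+1}{2}$ and the generic Hilbert function. The step (ii) $\Rightarrow$ (iv) is then a one-line regularity computation: under (ii), the Hilbert function of $R/I$ attains its stable value $\binom{\alpha+1}{2}$ exactly at $t = \alpha - 1$, so $\reg(R/I) = \alpha - 1$ and $\reg(I) = \alpha = \alpha(I)$. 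Finally, (iv) $\Rightarrow$ (iii) is the Hilbert--Burch theorem applied to the codimension-$2$ Cohen--Macaulay ideal $I$: the equality $\reg(I) = \alpha$ forces a purely linear resolution $0 \to R^s(-\alpha - 1) \to R^{s+1}(-\alpha) \to I \to 0$, and matching the multiplicity $e(R/I) = |X|$ with the Huneke--Miller formula (exactly as in Lemma \ref{lem}) pins $s = \alpha$.

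The equivalence (iv) $\Leftrightarrow$ (v) reduces to the nontrivial direction (iv) $\Rightarrow$ (v): one must show that every power $I^m$ inherits a linear resolution with $\reg(I^m) = m\alpha$. I would argue this either by invoking componentwise linearity of $I$ (so that powers of $I$ have the expected regularity via the Conca--Herzog type result), or by constructing explicit resolutions of $I^m$ from the Eagon--Northcott / Buchsbaum--Eisenbud complex associated to the symmetric powers of the Hilbert--Burch matrix of (iii). The converse is immediate by taking $m = 1$.

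For the final loop, (vii) $\Rightarrow$ (vi) is trivial, and (vi) $\Rightarrow$ (i) uses a Hilbert function argument parallel to (i) $\Rightarrow$ (ii): the count $\dim (I^2)_{2\alpha} = \binom{\alpha+2}{2}$ together with equigeneration in degree $2\alpha$ suffices to recover $|X| = \binom{\alpha+1}{2}$ and the generic Hilbert function, whence (ii) and hence (i). The main obstacle is the remaining implication (i) $\Rightarrow$ (vii): one must exhibit the stated linear resolution of $I^2$ from the Hilbert--Burch matrix $A$ of (iii) and verify its minimality. I expect to build this via the second symmetric power $S_2(A)$, producing the Betti numbers $\binom{\alpha+2}{2}$, $2\binom{\alpha+1}{2}$, $\binom{\alpha}{2}$ in consecutive homological positions, and then rule out cancellation by a direct Hilbert function computation of $R/I^2$ in degrees $\le 2\alpha + 2$. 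This last minimality check is where the real work lies; the remaining content of the theorem follows cleanly from the Hilbert--Burch structure once (i)--(iv) have been unified.
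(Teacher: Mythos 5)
The soft spot in your plan is (iv) $\Rightarrow$ (v). Your first proposed tool --- that a linear resolution (or componentwise linearity) of $I$ forces all powers to have the expected regularity ``via a Conca--Herzog type result'' --- is not a valid general principle: there are ideals with linear resolution whose square has no linear resolution, and $\reg(I^m)$ can exceed $m\,\reg(I)$ in general (Sturmfels/Conca-type examples), so something beyond condition (iv) itself must enter. What saves the statement here is that $\dim R/I=1$, and the precise ingredient is the bound $\reg(I^m)\le m\,\reg(I)$ valid whenever $\dim R/I\le 1$ (Chandler; Geramita--Gimigliano--Pitteloud \cite[Theorem 1.1]{GGP}); this is exactly what the paper invokes, and with it (iv) $\Rightarrow$ (v) is two lines, since $I^m$ is generated in degree $m\alpha$ and hence $\reg(I^m)\ge m\alpha$. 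Your fallback via Eagon--Northcott/Buchsbaum--Eisenbud complexes built from symmetric powers of the Hilbert--Burch matrix is not automatic either: those complexes a priori resolve $\mathrm{Sym}_m(I)$, and since $\mu(I)=\alpha+1>3=\dim R$ once $\alpha\ge 3$, the ideal $I$ is not of linear type, so identifying $\mathrm{Sym}_m(I)$ with $I^m$ (torsion-freeness) and proving acyclicity is genuine extra work that the proposal does not address. The same caveat applies to your route to (i) $\Rightarrow$ (vii) via $S_2(A)$, which you rightly flag as the place where real work lies; the paper sidesteps both of these by citing \cite[Theorem 2.3, Lemma 4.2]{789}.

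The rest of your architecture is sound and is in fact more self-contained than the paper, which delegates (i) $\Rightarrow$ (ii) $\Rightarrow$ (iii), (i) $\Rightarrow$ (vi), (iii) $\Rightarrow$ (vii) and part of (vi) $\Rightarrow$ (i) to \cite{789} and runs the cycle (i) $\Rightarrow\cdots\Rightarrow$ (v) $\Rightarrow$ (i) with a Hilbert-function argument only for (v) $\Rightarrow$ (i). Your Hilbert-function proofs of (i) $\Rightarrow$ (ii), (ii) $\Rightarrow$ (iv) and (vi) $\Rightarrow$ (i) (monotonicity plus stabilization once two consecutive values agree) and the Hilbert--Burch argument for (iv) $\Rightarrow$ (iii) are fine, with one small correction: at the stage (iv) $\Rightarrow$ (iii) you cannot yet ``match $e(R/I)=|X|$'' with the Huneke--Miller formula, because $|X|=\binom{\alpha+1}{2}$ is not known there; but $s=\alpha$ is forced anyway, either because exactness of a pure resolution with shifts $\alpha,\alpha+1$ determines the Betti numbers (the Hilbert polynomial of $R/I$ must be constant), or by first computing $|X|=H(R/I,\alpha-1)=\binom{\alpha+1}{2}$ from $\reg(I)=\alpha$. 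So: fix (iv) $\Rightarrow$ (v) by invoking the dimension-one regularity bound explicitly, and either cite or genuinely carry out the resolution of $I^2$; as written, these two steps are gaps.
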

\begin{proof}
The implications $(i) \Rightarrow (ii)$ and $(ii) \Rightarrow (iii)$
follow from \cite[Theorem 4.6]{789} and its proof. Moreover,  $(iii)
\Rightarrow (iv)$ is immediate.

To show  $(iv)$  implies $(v)$, let $d(I)$ denote the maximum degree
of elements of a minimal set of generators of $I$. By definition of
the regularity, it is clear that $ d(I^m)= md(I) \leq \reg(I^m)$.
Moreover, since Krull dimension of $ R/I$ is one, by \cite[Theorem
1.1]{GGP}, $\reg(I^m) \leq m \reg(I)$. Since $\alpha(I)=\reg(I)$,
thus $d(I)=\reg(I)$. Therefore, these two latter inequalities imply
$\reg(I^m) = m \reg(I)$.

We show $(v)$ implies $(i)$. Since $\alpha(I)=\reg(I)$, so the
degree of all elements of a minimal set of generators of $I$ are
equal. Let $c$ be the number of a minimal set of generators of $I$
of degree $\alpha$. Now we have to show  $c=\alpha+1$. Since $I$ is
generated by $c$ generators of degree $\alpha$, hence
$H(R/I,\alpha)=\binom{\alpha+2}{2}-c$ and also
$H(R/I,t)=\binom{t+2}{2}$, for $t \leq \alpha-1$, in particular
$H(R/I,\alpha-1)=\binom{\alpha+1}{2}$. Since $X$ is a zero
dimensional subscheme in $\mathbb{P}^2$,  the Hilbert function of
$X$ would be constant for $t \geq \reg(I)-1=\alpha-1$, which is
equal to $\deg X$ (see \cite[Section 1.2]{BH}). Thus
$H(R/I,\alpha-1) = H(R/I,\alpha)$. In particular,
$\binom{\alpha+1}{2} = \binom{\alpha+2}{2}-c$, which yields
$c=\alpha+1$.

By \cite[Lemma 4.2]{789},  $(i)$ implies $ (vi)$. To show $(vi)$
implies $(i)$, let $\mathscr{G}$ be a set of minimal generators of
$I$
 and let $d=d(I)$ denote the maximum degree of
elements of $\mathscr{G}$. We claim that $d=\alpha$. On the
contrary, let $d
> \alpha$, so there exists two polynomials $F$ and $G$ in
$\mathscr{G}$ such that $\deg F=\alpha$ and $\deg G=d$. By
\cite[Theorem 2.3]{789}, the polynomial $FG$ is of degree
$(d+\alpha)$ and is an element of a minimal set of generators of
$I^2$. But $(d+\alpha) > 2\alpha$, which
 contradicts our assumption. Now, let $c$ be  the number of elements of degree $\alpha$ in $\mathscr{G}$.
 By \cite[Lemma 4.2]{789}, the ideal $I^2$ has
$\binom{c+1}{2}$ minimal generators of degree $2\alpha$. Therefore,
$\binom{c+1}{2}=\binom{\alpha+2}{2}$. In particular, $c=\alpha+1$.

By Theorem 2.3 of \cite{789}, $(iii)$ implies $(vii)$.
At last, the  minimality of  free resolution of $I^2$,
immediately gives the implication $(vii) \Rightarrow (vi)$.
\end{proof}
A closer look at to the proof of the implication $(iv) \Rightarrow
(v)$ in Theorem \ref{th reg},  shows that for each positive integer
$m$, the ideal $I^m$ has linear resolution. This has the worth to be
mentioned as a corollary.
\begin{cor}
Let $I$ be the ideal of a configuration of points in $\mathbb{P}^2$ such that $\reg(I)=\alpha(I)$.
Then all ordinary powers of $I$ has linear free resolution.
\end{cor}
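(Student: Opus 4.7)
The plan is essentially to read off the conclusion from the chain of inequalities already produced in the $(iv)\Rightarrow(v)$ step of Theorem \ref{th reg}, together with the standard characterization of a linear resolution.

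First I would observe that the hypothesis $\reg(I)=\alpha(I)$ forces $I$ to be generated in a single degree, namely $\alpha=\alpha(I)$. Indeed, if $d(I)$ is the top degree of a minimal generator then $\alpha(I)\le d(I)\le\reg(I)$, and the two outer terms agreeing collapses the whole range, so every minimal generator of $I$ sits in degree $\alpha$. In particular $I^m$ is generated in the single degree $m\alpha$, so $d(I^m)=m\alpha$.

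Next I would invoke the same two-sided estimate used inside Theorem \ref{th reg}: the trivial lower bound $d(I^m)\le\reg(I^m)$ and the upper bound $\reg(I^m)\le m\reg(I)$ coming from \cite[Theorem 1.1]{GGP} (applicable because $X$ is zero-dimensional in $\mathbb{P}^2$, so $R/I$ has Krull dimension one). Stringing these together yields
\[
m\alpha \;=\; d(I^m) \;\le\; \reg(I^m) \;\le\; m\reg(I) \;=\; m\alpha,
\]
so $\reg(I^m)=m\alpha=d(I^m)$. Since $I^m$ is generated in the single degree $m\alpha$ and its regularity equals that degree, the minimal free resolution of $I^m$ is linear, which is exactly the claim.

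There is no real obstacle here: the proof of $(iv)\Rightarrow(v)$ in Theorem \ref{th reg} already contains both inequalities, and the only additional remark needed is that the coincidence $\reg(I^m)=d(I^m)$ with a single-degree generation is the standard criterion for a linear resolution. Thus the corollary is obtained by repackaging the argument, with no new input beyond the assumption $\reg(I)=\alpha(I)$ and the GGP bound.
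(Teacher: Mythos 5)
Your proposal is correct and follows the same route as the paper: the paper obtains this corollary precisely by re-reading its proof of $(iv)\Rightarrow(v)$ in Theorem \ref{th reg}, which consists of the same two inequalities $d(I^m)=m\alpha\le\reg(I^m)$ and $\reg(I^m)\le m\reg(I)$ from \cite[Theorem 1.1]{GGP}, combined with the observation that $\reg(I)=\alpha(I)$ forces generation in the single degree $\alpha$. Your only addition is to state explicitly the standard fact that an ideal generated in a single degree whose regularity equals that degree has a linear minimal free resolution, which the paper leaves implicit.
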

\begin{rem}
In addition to the quasi star configurations,  two distinct classes
of configurations of points in $\mathbb{P}^2$ can be named, such that
their defining ideals meet the conditions of Theorem \ref{th reg}.
These are:
\begin{itemize}
\item[(a)] the star configuration $S_2(2,d+1)$, for which  $\reg(I(S_2(2,d+1)))=\alpha(I(S_2(2,d+1)))=d$, by
\cite[Lemma 2.4.2]{BH};
\item[(b)] for any integer $d\ge 1$, the configuration $X$ consists of
$n=\binom{d-1+2}{2}$ generic points,  in
$\mathbb{P}^2$, for which $\reg(I(X))=\alpha(I(X))=d$ (see \cite[Section
1.3]{BH}).
\end{itemize}
An interesting problem that may arise in this regard, is to classify all
configurations of points $Z$ in $\mathbb{P}^2$ such that $I(Z)$
satisfies in one of the equivalent  conditions of Theorem \ref{th
reg}.
\end{rem}
The following example shows that the above three mentioned classes
of configurations of points  may have different features that
distinguish them from each other.
\begin{exm}
Let $X$ be a configuration of 6 generic points in $\mathbb{P}^2$,
$Y=S_2(2,4)$, be a star configuration generated by 4 general lines
in $\mathbb{P}^2$ and finally let $W=Z_3=T_3+S_2(2,3)$ be the quasi
star configuration generated by 3 generic lines in $\mathbb{P}^2$.
Then by our assumptions,  the number of points in $Y$ and $W$ is the
same as the number of points in $X$, while their resurgences are
different.
\begin{itemize}
\item [(1)]  By
\cite[Corollary
1.3.1]{BH}, $\rho(X)=5/4$; 
\item[(2)] By \cite[Theorem
2.4.3]{BH}, $\rho(Y)=3/2;$ 
\item[(3)]  By Example \ref{exm1},
$\rho(W)=4/3$.
\end{itemize}
\end{exm}

\section{Proof of the  main Theorem}

By Theorem \ref{the3}, for a quasi star configuration $Z_d$, we have
$\alpha(I(Z_d))=\reg(I(Z_d))=d$. Thus by $(\ref{19})$, computing
 $\rho(I(Z_d))$ depends on the computing of
$\widehat{\alpha}(I(Z_d))$. For the
initial case of $d$, i.e., $d=3$, we can compute the exact value of
$\rho(I(Z_d))$. In fact
\begin{exm}\label{exm1}
Let $Z_3$ be the quasi star configuration of 6 points.  Theorem
\ref{the3}, implies $\alpha(I(Z_3))=\reg(I(Z_3))=3$ and by
\cite[Proposition 3.1]{DST2}, we have
$\widehat{\alpha}(I(Z_3))=9/4$. Then $(\ref{19})$ yields
$\rho(I(Z_3))=4/3$.
\end{exm}
However, finding the exact  value of
$\widehat{\alpha}(I(Z_d))$, whenever $d \geq 4$, 
seems to be a hard
problem. Hence, it is reasonable to look for good bounds for $\widehat{\alpha}(I(Z_d))$.
In the sequel, our goal is to establish an upper bound
for the Waldschmidt constant 
of the defining ideal of a quasi star configuration $I(Z_d)$.

Recall that for a real number $a$, $\lceil a \rceil$
denotes the least integer greater than or equal to $a$, and $\lfloor a \rfloor$
denotes the greatest integer less than or equal to $a$.

\begin{thm}\label{th}
Let $d \geq 4$ be an integer and let $Z_d=T_d + S_2(2,d)$ be a quasi
star configuration of points in $\mathbb{P}^2$. Let $I=I(Z_d)$. Then
\begin{itemize}
\item[$(a)$]if $d \leq 9$ then $\widehat{\alpha}(I) \leq \frac{d+c_d}{2}$,
where $c_d=2,2, \frac{12}{5}, \frac{21}{8}, \frac{48}{17} \text{ and } 3$
for $d=4, \dots , 9$, respectively;
\item[$(b)$]if $d \geq 10$ then $\widehat{\alpha}(I) \leq \frac{d+\sqrt{d}}{2}$.
\end{itemize}
\end{thm}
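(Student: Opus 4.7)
The plan is to exploit the inequality $\widehat{\alpha}(I)\le \alpha(I^{(m)})/m$ by exhibiting, for suitable $m$, an explicit form of low degree in the symbolic power $I^{(m)}=I(Z_d)^{(m)}$. Writing $Z_d=T_d+S_2(2,d)$ with $T_d=\{q_1,\dots,q_d\}$ and defining lines $L_1,\dots,L_d$, the form I propose is
\begin{equation*}
G\;=\;(L_1L_2\cdots L_d)^{j}\cdot F,
\end{equation*}
where $j\ge 1$ is a parameter depending on $d$ and $F$ is a form of degree $k$ lying in $I(T_d)^{(j)}$. The factor $(L_1\cdots L_d)^{j}$ has degree $dj$; by the general-position hypothesis on the lines, it vanishes to order $2j$ at every star point $p_{ij}=L_i\cap L_j$ and to order exactly $j$ at each $q_i$ (only $L_i$ passes through $q_i$, by the definition of a quasi star configuration). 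Multiplying by $F$ raises the order at each $q_i$ to at least $2j$ without disturbing the order at the star points, so $G\in I(Z_d)^{(2j)}$. Setting $m=2j$ produces the central estimate
\begin{equation*}
\widehat{\alpha}(I)\;\le\;\frac{\deg G}{m}\;=\;\frac{d}{2}+\frac{k}{2j},
\end{equation*}
and the task reduces to producing $F\in I(T_d)^{(j)}$ with $k/j$ as small as the statement demands.

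For the existence of $F$ I will use the unconditional dimension estimate
\begin{equation*}
\dim_{\mathbb{K}}\bigl(I(T_d)^{(j)}\bigr)_{k}\;\ge\;\binom{k+2}{2}-d\binom{j+1}{2},
\end{equation*}
which follows from the bound $H(R/I(T_d)^{(j)},k)\le d\binom{j+1}{2}$ on the Hilbert function of the fat-point scheme $jT_d$ (whose total multiplicity is $d\binom{j+1}{2}$). This bound is valid for any $d$-tuple of distinct points $T_d$, not just for generic ones, so the constraint $q_i\in L_i$ imposed by our quasi star configuration causes no difficulty.

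For part $(a)$ with $4\le d\le 9$, I will take the pairs $(j,k)=(1,2),(1,2),(5,12),(8,21),(17,48),(1,3)$ for $d=4,\dots,9$ respectively; direct arithmetic confirms $\binom{k+2}{2}-d\binom{j+1}{2}\ge 1$ in each case (for instance $91-6\cdot 15=1$ when $d=6$ and $1225-8\cdot 153=1$ when $d=8$), so the required $F$ exists and $k/j=c_d$. For part $(b)$ with $d\ge 10$, a short asymptotic analysis of the inequality $(k+1)(k+2)>dj(j+1)$ shows that it holds whenever $k\ge j\sqrt{d}+(\sqrt{d}-3)/2+O(1/j)$, so letting $j\to\infty$ in the central estimate yields $\widehat{\alpha}(I)\le (d+\sqrt{d})/2$.

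The hard part is spotting the right construction at the outset: augmenting the natural symbolic product $(L_1\cdots L_d)^{j}$ (which handles the star points efficiently but under-vanishes at each $q_i$) by an auxiliary fat-point form $F$ supplying exactly the missing multiplicity at the $q_i$'s. The specific rational values $c_d=12/5,21/8,48/17$ then appear precisely because the dimension inequality is saturated at those particular $(j,k)$, and once this construction is recognized the remainder of the proof reduces to standard degree and multiplicity bookkeeping at the star points and at the $q_i$'s.
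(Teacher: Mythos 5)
Your proposal is correct, and its core construction is exactly the one the paper uses: multiply $(L_1L_2\cdots L_d)^{j}$, which already vanishes to order $2j$ at the star points and to order $j$ at each $q_i$, by an auxiliary form $F\in I(T_d)^{(j)}$ to land in $I^{(2j)}$, then bound $\widehat{\alpha}(I)\le \alpha(I^{(2j)})/(2j)$. Where you differ is in how $F$ is produced. The paper, for $d\le 9$, quotes the known exact values $\alpha(I(W_d)^{(m)})=\lceil c_dm\rceil$ for $d$ \emph{generic} points and then uses semicontinuity ($\alpha(I(T_d)^{(m)})\le\alpha(I(W_d)^{(m)})$) to transfer this to the constrained points $T_d$; for $d\ge 10$ it quotes $\widehat{\alpha}(I(W_d))\le\sqrt{d}$ together with the inequality $\alpha(I(T_d)^{(m)})\le (m+1)\widehat{\alpha}(I(T_d))$. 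You instead use the unconditional conditions count $\dim_{\mathbb{K}}(I(T_d)^{(j)})_k\ge\binom{k+2}{2}-d\binom{j+1}{2}$, verified at the pairs $(j,k)=(1,2),(1,2),(5,12),(8,21),(17,48),(1,3)$ for $d=4,\dots,9$ and asymptotically ($k\approx j\sqrt{d}+(\sqrt{d}-3)/2$, $j\to\infty$) for $d\ge 10$; your arithmetic checks out in all cases. This buys you a more elementary and self-contained argument: you only need the easy existence direction of the dimension count, valid for arbitrary distinct points, so no semicontinuity or citation of the (nontrivial) classification of initial degrees of symbolic powers of up to nine generic points is needed. What the paper's route buys in exchange is the explicit link between the constants $c_d$ and the Waldschmidt constants of generic points, which explains where those particular rational numbers come from; in your version they appear as the extremal ratios at which the virtual dimension count is saturated, which is the same phenomenon seen from the other side.
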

\begin{proof}
$(a)$ Let $W_d=\{ p_1, \dots, p_d \}$ be a set of $4 \leq d \leq 9$
general points in $\mathbb{P}^2$. It is known that
$\alpha(I(W_d)^{(m)})=\lceil c_dm \rceil$ (see for example
\cite{H}). Since the points of $W_d$ are in general position then
$\alpha(I(T_d)^{(m)}) \leq \alpha(I(W_d)^{(m)})=\lceil c_dm \rceil$.
Let now $c_d=\frac{a}{b}$, where $a$ and $b$ are two positive
integers and let $D$ be the polynomial $D=(L_1 L_2 \dots L_d)^{bm}$.
It is clear that $D \in I(S_2(2,d)^{(2bm)})$ and $D \in
I(T_d)^{(bm)}$. Since $\alpha(I(T_d)^{(bm)}) \leq \lceil c_dbm
\rceil = \lceil \frac{ab}{b}m \rceil=am$, thus there exists a
polynomial of degree $am$, for example $F$, vanishes to order $bm$
along $T_d$. Thus $FD \in I^{(2bm)}$. Therefore
\begin{center}
$\widehat{\alpha}(I) \leq \frac{\alpha(I^{(2bm)})}{2bm} \leq
\frac{\text{deg}F + \text{deg}D}{2bm} = \frac{am + bmd}{2bm}=
\frac{\frac{a}{b}+d}{2} = \frac{d + c_d}{2}$.
\end{center}

$(b)$ To prove the assertion, we use the same method as in the proof
of $(a)$. Let $d \geq 10$ be an integer and let $W_d$ be a set of
$d$ points in $\mathbb{P}^2$. It is always true that
$\widehat{\alpha}(I(W_d)) \leq \sqrt{d}$ ( \cite[Proposition
3.4]{CH}). By inequality (\ref{eq}), for all $m \geq 1$, we have
$\frac{\alpha(I(T_d)^{(m)})}{m+1} \leq \widehat{\alpha}(I(T_d))$.
Therefore $\alpha(I(T_d)^{(m)}) \leq (m+1)\sqrt{d}$. It means that
there exists a polynomial of degree $\lfloor (m+1)\sqrt{d} \rfloor$,
for example $F^\prime$, vanishes to order $m$ along $T_d$. Let now
$D^\prime$ be the polynomial $D^\prime =(L_1 L_2  \dots L_d)^m$. It
is easy to see that $F^\prime D^\prime$ is an element of $I^{(2m)}$
and is of degree $md+\lfloor (m+1)\sqrt{d} \rfloor \leq
m(d+\sqrt{d})+\sqrt{d}$. If allowing $m$ tends to infinity, then
$\widehat{\alpha}(I) \leq (d+\sqrt{d})/2$. Hence the claim
stablishes.
\end{proof}
Now we can use Theorems \ref{the3} and \ref{th} to bound the
resurgence $\rho(I(Z_d))$ of defining ideal of quasi star configuration $Z_d$ as follows:
\begin{thm}\label{main th}
Let $I$ be the ideal associated to quasi
star configuration of points $Z_d$ in $\mathbb{P}^2$.
\begin{itemize}
\item[$(a)$]If $4 \leq d \leq 9$ then $2-\frac{2c_d}{d+c_d}
 \leq \rho(I) \leq 2- \frac{2}{d+1}$,
\item[$(b)$]If $d \geq 10$ then $2-\frac{2}{\sqrt{d}+1}
\leq \rho(I) \leq 2- \frac{2}{d+1}$,
\end{itemize}
where $c_d$ is the one which defined in Theorem \ref{th}.
\end{thm}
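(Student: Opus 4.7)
The plan is to combine Theorem \ref{the3} with equation (\ref{19}) so that the whole statement reduces to a two-sided bound on $\widehat{\alpha}(I)$. Indeed, Theorem \ref{the3} gives $\alpha(I)=\reg(I)=d$, so (\ref{19}) collapses the Bocci--Harbourne sandwich into the identity
\[
\rho(I)=\frac{d}{\widehat{\alpha}(I)}.
\]
Hence the claimed bounds on $\rho(I)$ are algebraically equivalent to the inequalities $\frac{d+1}{2}\le \widehat{\alpha}(I)\le \frac{d+c_d}{2}$ in the range $4\le d\le 9$, and $\frac{d+1}{2}\le \widehat{\alpha}(I)\le \frac{d+\sqrt{d}}{2}$ in the range $d\ge 10$. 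The work then splits cleanly into an upper-bound half and a lower-bound half for $\widehat{\alpha}(I)$.

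For the \emph{lower} bound on $\rho(I)$, I would simply feed the upper bounds on $\widehat{\alpha}(I)$ supplied by Theorem \ref{th} into the identity above. In case $(a)$, $\widehat{\alpha}(I)\le (d+c_d)/2$ yields $\rho(I)\ge 2d/(d+c_d)=2-2c_d/(d+c_d)$; in case $(b)$, $\widehat{\alpha}(I)\le (d+\sqrt{d})/2$ yields $\rho(I)\ge 2d/(d+\sqrt{d})=2\sqrt{d}/(\sqrt{d}+1)=2-2/(\sqrt{d}+1)$. Both are direct algebraic manipulations requiring no new input.

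For the \emph{upper} bound on $\rho(I)$ I would invoke Chudnovsky's classical lower bound for reduced zero-dimensional subschemes of $\mathbb{P}^2$, which asserts $\widehat{\alpha}(J)\ge (\alpha(J)+1)/2$ for any radical ideal of a finite reduced set of points in $\mathbb{P}^2$. Applying it with $\alpha(I)=d$ gives $\widehat{\alpha}(I)\ge (d+1)/2$, and hence $\rho(I)\le 2d/(d+1)=2-2/(d+1)$. The conceptual point to register is that no configuration-specific refinement is needed here: since $\alpha(I)=d$ is already realized on the $d(d+1)/2$ points of $Z_d$, the generic Chudnovsky bound is sharp enough. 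I do not anticipate any serious obstacle, because the construction of the vanishing polynomial $F\cdot D$ that furnishes the hard upper bound on $\widehat{\alpha}(I)$ has already been carried out in Theorem \ref{th}; the remaining task is only to translate those bounds through the identity $\rho(I)=d/\widehat{\alpha}(I)$.
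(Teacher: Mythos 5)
Your proposal is correct and follows essentially the same route as the paper: reduce to $\rho(I)=d/\widehat{\alpha}(I)$ via Theorem \ref{the3} and (\ref{19}), get the lower bounds on $\rho(I)$ from the upper bounds on $\widehat{\alpha}(I)$ in Theorem \ref{th}, and get the upper bound from the Chudnovsky-type inequality $\widehat{\alpha}(I)\ge(\alpha(I)+1)/2$, which the paper cites as \cite[Proposition 3.1]{Ha_Hu}. No substantive difference.
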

\begin{proof}
By Theorem \ref{the3}, we have $\reg(I)=\alpha(I)=d$. Thus by
(\ref{19}),
 $\rho(I)=\frac{\alpha(I)}{\widehat{\alpha}(I)}$.
Hence the statement depends on bounds for $\widehat{\alpha}(I)$. By
\cite[Proposition 3.1]{Ha_Hu}, for every finite set of points $W$ in
$\mathbb{P}^2$, we have $\frac{\alpha(I(W))+1}{2} \leq
\widehat{\alpha}(I(W))$. In particular, $\frac{d+1}{2} \leq
\widehat{\alpha}(I)$, which yields $\rho(I) =
\frac{\alpha(I)}{\widehat{\alpha}(I)} \leq  \frac{2d}{d+1}= 2-
\frac{2}{d+1}$. By Theorem \ref{th}, for $d \geq 10$, we have
$\widehat{\alpha}(I) \leq \frac{d+\sqrt{d}}{2}$, which yields
$\rho(I)= \frac{\alpha(I)}{\widehat{\alpha}(I)} \geq
\frac{2d}{d+\sqrt{d}} = 2-\frac{2}{\sqrt{d}+1}$. For $4 \leq d \leq
9$ with a similar argument, we can deduce $\rho(I) \geq
2-\frac{2c_d}{d+c_d}$, as required.
\end{proof}

\section{Some Applications }

In this section we give some applications of Theorem \ref{main th}.
As a first consequence of this theorem, we have:
\begin{cor}\label{cor2}
Let $ 0< \varepsilon < \frac{1}{2}$ be a real number.
Then there exists a radical ideal of points $I_{\varepsilon}$ in
$\mathbb{K}[\mathbb{P}^2]$ such that $\rho(I_{\varepsilon}) \in [2- \varepsilon ,2).$
\end{cor}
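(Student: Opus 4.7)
The plan is to apply Theorem \ref{main th} directly: pick $d$ large enough so that the lower bound $2-\frac{2}{\sqrt{d}+1}$ is at least $2-\varepsilon$, and take $I_{\varepsilon}$ to be the defining ideal $I(Z_d)$ of a quasi star configuration. Since Theorem \ref{main th}(b) also supplies the upper bound $\rho(I(Z_d)) \leq 2-\frac{2}{d+1} < 2$, this immediately places $\rho(I_\varepsilon)$ inside $[2-\varepsilon, 2)$.

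More concretely, given $0 < \varepsilon < \frac{1}{2}$, I would choose
\[
d \;\geq\; \max\!\left\{10,\ \left\lceil\left(\tfrac{2}{\varepsilon}-1\right)^{2}\right\rceil\right\}.
\]
The inequality $d \geq \bigl(\tfrac{2}{\varepsilon}-1\bigr)^{2}$ rearranges to $\sqrt{d}+1 \geq \tfrac{2}{\varepsilon}$, i.e.\ $\tfrac{2}{\sqrt{d}+1} \leq \varepsilon$. Note that the hypothesis $\varepsilon < \tfrac12$ is precisely what guarantees $(\tfrac{2}{\varepsilon}-1)^{2} > 9$, so the second term in the max is compatible with (and in fact forces) the range $d \geq 10$ in which Theorem \ref{main th}(b) applies; this is why the statement of the corollary is restricted to $\varepsilon < \tfrac12$.

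Setting $I_\varepsilon := I(Z_d)$ for such a $d$, Theorem \ref{main th}(b) yields
\[
2-\varepsilon \;\leq\; 2-\frac{2}{\sqrt{d}+1} \;\leq\; \rho(I_\varepsilon) \;\leq\; 2-\frac{2}{d+1} \;<\; 2,
\]
which is exactly the desired containment $\rho(I_\varepsilon) \in [2-\varepsilon, 2)$. Since $Z_d$ is a finite set of distinct reduced points, $I_\varepsilon$ is a radical ideal of points, as required.

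There is no real obstacle here; the corollary is a bookkeeping consequence of the main theorem, and the only thing to verify is that the choice of $d$ can be made to simultaneously satisfy $d \geq 10$ and $\sqrt{d}+1 \geq 2/\varepsilon$, which the restriction $\varepsilon < 1/2$ makes automatic. I would also explicitly remark that infinitely many values of $d$ work, so in fact the corollary produces an entire infinite family $\{I(Z_d)\}$ of radical ideals of points whose resurgences accumulate at $2$ from below, matching the claim in the abstract.
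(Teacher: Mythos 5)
Your proposal is correct and follows essentially the same route as the paper: choose an integer $d \geq \bigl(\tfrac{2}{\varepsilon}-1\bigr)^{2}$ (which, since $\varepsilon < \tfrac12$, forces $d \geq 10$), and apply Theorem \ref{main th}(b) to sandwich $\rho(I(Z_d))$ between $2-\varepsilon$ and $2$. The only difference is cosmetic: you make the implication $\varepsilon<\tfrac12 \Rightarrow d\geq 10$ explicit, which the paper states without comment.
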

\begin{proof}
Let $d \geq (\frac{2}{\varepsilon}-1)^2$ be an integer and
let $Z_d$ be the quasi star configuration associated to $d$.
Since $\varepsilon <\frac{1}{2}$ and since
$d \geq (\frac{2}{\varepsilon}-1)^2$, one can see that $d \geq 10$ and
$2-\varepsilon \leq 2-\frac{2}{\sqrt{d}+1}$.
Now by Theorem \ref{main th}, we have
\begin{center}
$2-\varepsilon \leq 2-\frac{2}{\sqrt{d}+1} \leq \rho(I(Z_d))
\leq 2- \frac{2}{d+1} < 2$.
\end{center}
\end{proof}

Known radical ideals of points $I$ in $\mathbb{K}[\mathbb{P}^2]$ for which the containment
$I^{(3)} \subseteq I^2$ fails, are rare,
and even it is not known for which positive integers $r\ge 3$ the containment $I^{(2r-1)} \subseteq I^r$ fails.
As a corollary of Theorem \ref{main th}, we can construct ideals
of points such that they may be a candidate for the
 failure of
$I^{(2r-1)} \subseteq I^r$.
\begin{cor}\label{cor1}
Let $r \geq 2$ be an integer. Then there exists a radical ideal of points $I$ such that
it has the necessary condition for the failure $I^{(2r-1)} \subseteq I^r$, i.e.,
$\rho(I) \geq \frac{2r-1}{r}$.
\end{cor}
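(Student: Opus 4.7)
The plan is to realise the desired radical ideal $I$ as the defining ideal $I(Z_d)$ of a quasi star configuration of points in $\mathbb{P}^2$, and then to invoke the lower bound in Theorem \ref{main th}(b) to certify $\rho(I) \geq \frac{2r-1}{r}$ by choosing $d$ sufficiently large in terms of $r$. Since $Z_d$ is by construction a reduced zero-dimensional subscheme, $I(Z_d)$ is automatically a radical ideal of points, so the only issue is to make the resurgence large enough.

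The key step is to identify the threshold value of $d$. Writing $\frac{2r-1}{r} = 2 - \frac{1}{r}$, the bound $\rho(I(Z_d)) \geq 2 - \frac{2}{\sqrt{d}+1}$ supplied by Theorem \ref{main th}(b) gives the desired inequality as soon as $\frac{2}{\sqrt{d}+1} \leq \frac{1}{r}$, which rearranges to $d \geq (2r-1)^2$. Thus I would take an integer $d$ satisfying both $d \geq 10$ (the hypothesis of Theorem \ref{main th}(b)) and $d \geq (2r-1)^2$, and set $I = I(Z_d)$; the chain of inequalities
\[
\frac{2r-1}{r} = 2 - \frac{1}{r} \leq 2 - \frac{2}{\sqrt{d}+1} \leq \rho(I(Z_d))
\]
then closes the argument.

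For $r \geq 3$ one has $(2r-1)^2 \geq 25$, so the condition $d \geq 10$ is automatic, and the choice $d = (2r-1)^2$ settles the corollary at once. The only boundary case is $r = 2$, where $(2r-1)^2 = 9$ falls just short of the applicability threshold of Theorem \ref{main th}(b); here I would simply take $d = 10$, noting that $\sqrt{10}+1 > 4$ gives $2 - \frac{2}{\sqrt{10}+1} > \frac{3}{2}$, as required. (Alternatively, Theorem \ref{main th}(a) applied to $d = 9$ with $c_9 = 3$ yields $\rho(I(Z_9)) \geq 2 - \frac{6}{12} = \frac{3}{2}$ directly.) There is no substantial obstacle to overcome: the corollary amounts to inverting the quantitative bound of Theorem \ref{main th} to express the size of $d$ needed to reach the $\frac{2r-1}{r}$ threshold, with only the minor bookkeeping of the edge case $r = 2$ to handle.
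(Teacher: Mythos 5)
Your proposal is correct and follows essentially the same route as the paper: choose $d \geq (2r-1)^2$, note that this gives $\frac{2r-1}{r} = 2-\frac{1}{r} \leq 2-\frac{2}{\sqrt{d}+1}$, and apply the lower bound of Theorem \ref{main th} to $I = I(Z_d)$. Your explicit handling of the boundary case $r=2$ (where $(2r-1)^2 = 9$ falls below the $d \geq 10$ hypothesis of part (b)) is a small point of extra care that the paper's proof glosses over, but it does not change the argument.
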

\begin{proof}
Let $d \geq (2r-1)^2$ be an integer and let
$Z_d=\sum_{i=1}^{d}q_i + S_2(2,d)$
be the quasi star configuration of points in
$\mathbb{P}^2$.
Since $d \geq (2r-1)^2$, one can see that
$\frac{2r-1}{r} \leq 2-\frac{2}{\sqrt{d}+1}$.
Now, by Theorem \ref{main th}, we have
$\rho(I) \geq 2-\frac{2}{\sqrt{d}+1} \geq \frac{2r-1}{r}$.
\end{proof}

Harbourne and Bocci in \cite{BH}, showed that for a homogeneous
ideal $I$ of $\mathbb{K}[\mathbb{P}^N]$ the containment $I^{(m)}
\subseteq I^r$ for the bound $\frac{m}{r} \geq N$ is optimal. As a
consequence of Theorem \ref{main th}, when $N=2$, we can show that
the quasi star configurations meet this optimality too.
\begin{cor}\label{cor}
Let $I$ be a homogeneous ideal of $\mathbb{K}[\mathbb{P}^2]$ and let
$m$ and $r$ be two positive integers. Then for $m/r \ge 2$, the
containment $I^{(m)} \subseteq I^r$ is optimal.
\end{cor}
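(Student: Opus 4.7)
The plan is to interpret ``optimal'' as: the constant $2$ in the statement ``$m/r\ge 2 \Rightarrow I^{(m)}\subseteq I^r$'' cannot be replaced by any smaller real number; equivalently, for every $c<2$, there exist a homogeneous ideal $I\subset \mathbb{K}[\mathbb{P}^2]$ and positive integers $m,r$ with $m/r\ge c$ for which $I^{(m)}\not\subseteq I^r$. The strategy is to feed this back to the resurgence via Corollary~\ref{cor2}, since by definition $\rho(I)=\sup\{m/r\mid I^{(m)}\not\subseteq I^r\}$.

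First, given an arbitrary constant $c<2$, I would set $\varepsilon=(2-c)/2$, so that $0<\varepsilon<1/2$ and $c=2-2\varepsilon<2-\varepsilon$. Next, I invoke Corollary~\ref{cor2} to produce a radical ideal of points $I_\varepsilon\subset\mathbb{K}[\mathbb{P}^2]$ with
\begin{equation*}
\rho(I_\varepsilon)\in[2-\varepsilon,2).
\end{equation*}
In particular $c<2-\varepsilon\le \rho(I_\varepsilon)$. Since $\rho(I_\varepsilon)$ is by definition the supremum of the set of ratios $m/r$ at which the containment fails, the strict inequality $c<\rho(I_\varepsilon)$ guarantees the existence of a pair of positive integers $(m,r)$ with
\begin{equation*}
\frac{m}{r}>c \quad\text{and}\quad I_\varepsilon^{(m)}\not\subseteq I_\varepsilon^{r}.
\end{equation*}
This exhibits, for every $c<2$, an explicit ideal witnessing that the ratio $m/r$ cannot be lowered below $2$ while still forcing containment, which is exactly the required optimality.

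The construction is essentially a repackaging of Corollary~\ref{cor2}; no new technical content is needed beyond the already-established resurgence estimate from Theorem~\ref{main th}, whose proof in turn relied on Theorem~\ref{the3} (linear resolution, giving $\reg=\alpha$) and on the upper bound $\widehat{\alpha}(I(Z_d))\le (d+\sqrt d)/2$. The only subtle point, and arguably the main ``obstacle,'' is making the definitional passage from $c<\rho(I_\varepsilon)$ to an actual pair $(m,r)$ of \emph{integers} with $m/r>c$; but this is immediate from the sup-definition of $\rho$, as one can always approximate the supremum by genuine ratios from below. I would state this justification explicitly so that the reader sees that the existence of the failing pair $(m,r)$ is not just an asymptotic statement but a concrete one valid for the ideal $I_\varepsilon$ already produced.
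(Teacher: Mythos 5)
Your argument is correct and takes essentially the same route as the paper: the paper assumes containment holds whenever $m/r\ge c$ for some $c<2$, deduces $\rho(I(Z_d))\le c$ for every quasi star configuration, and lets $d\to\infty$ in Theorem \ref{main th}, while you merely package the same resurgence estimate through Corollary \ref{cor2} and then use the sup-definition of $\rho$ to extract a failing pair $(m,r)$. One small slip to patch: for an arbitrary $c<2$ the choice $\varepsilon=(2-c)/2$ lies in $(0,1/2)$ only when $c>1$, so either assume without loss of generality that $c\ge 3/2$ (legitimate, since containment for all ratios $\ge c$ implies it for all ratios $\ge 3/2$) or take $\varepsilon=\min\{(2-c)/2,\,1/4\}$, which still gives $c<2-\varepsilon$.
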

\begin{proof}
On the contrary, let there exists a real number $c <2$ such that for
any two positive integers $m,r$ with $m/r \geq c$ the containment
$I^{(m)} \subseteq I^r$  holds. Then we have $\rho(I) \leq c $. Now
let $Z_d$ be a quasi star configuration of points in $\mathbb{P}^2$
with  defining ideal  $I(Z_d)$. Let $d$ tends to infinity.
Then by Theorem \ref{main th},
$2=\rho(I(Z_d)) \leq c < 2$, which is a contradiction.
\end{proof}

\paragraph*{\bf Acknowledgement.} The authors would like to thank Alexandra Seceleanu for her very helpful comments
on the minimal free resolution of the ideal of points.


\end{document}